\documentclass[12pt]{article}

\usepackage[T1]{fontenc}
\usepackage[english]{babel}
\usepackage{amsmath,amssymb,amsthm,mathrsfs}
\usepackage[left=2cm,right=2cm,top=2cm,bottom=2cm]{geometry}
\usepackage{longtable,array}
\DeclareMathOperator{\tr}{tr}
\DeclareMathOperator{\sgn}{sgn}
\newcommand{\dd}{\mathop{}\!\mathrm d}
\newcommand{\ii}{\mathrm i}
\newcommand{\Id}{\operatorname{Id}}
\newcommand{\ord}{\operatorname{ord}}

\theoremstyle{plain}
\newtheorem{theorem}{Theorem}

\newtheorem{proposition}{Proposition}
\newtheorem{definition}{Definition}
\theoremstyle{definition}
\newtheorem{remark}{Remark}

\title{From quadratic integrals to Nijenhuis operators: two-dimensional dictionary}
\author{Dinmukhammed Akpan\footnote{Institut f\"ur Mathematik, Friedrich Schiller Universit\"at Jena, 07743 Jena, Germany; Institute of Mathematics and Mathematical Modeling, Almaty, Kazakhstan; \texttt{dinmukhammed.akpan@uni-jena.de}.}}
\date{}

\begin{document}
\maketitle

\begin{abstract}
We construct an explicit dictionary between two local classifications in dimension two: normal forms of pseudo-Riemannian metrics with a quadratic integral of the geodesic flow and normal forms of $\operatorname{gl}$-regular Nijenhuis operators. The correspondence is obtained by an explicit change of coordinates constructed from the trace and determinant of the operator. 
\end{abstract}

\section{Introduction and preliminaries}

Let $g$ be a pseudo-Riemannian metric of signature $(+,-)$ on a two-dimensional manifold $M^2$. Its geodesic flow is the
Hamiltonian system on $T^*M$ with Hamiltonian
\[
\mathcal H=\frac12 g^{ij}p_ip_j.
\]
We assume that the geodesic flow admits an integral quadratic in the momenta,
\[
F=a(x,y)p_x^2+b(x,y)p_xp_y+c(x,y)p_y^2,
\qquad \{\mathcal H,F\}=0.
\]

Local normal forms of two-dimensional pseudo-Riemannian metrics $g$ admitting a quadratic integral $F$ of the geodesic flow were obtained in \cite{AkpanBolsinov, BMP}. Any metric and a quadratic integral of its geodesic flow determine a $(1,1)$-tensor field
\begin{equation}\label{eq:L-definition} L:=\operatorname{adj}(g)\operatorname{adj}(F) =(\det g)(\det F)g^{-1}F^{-1}. \end{equation}
An important property of this construction is that $L$ is a Nijenhuis operator. The cases B1.1--B4 from \cite{AkpanBolsinov} give $\operatorname{gl}$-regular Nijenhuis operators. On the other hand, all local normal forms of two-dimensional $\operatorname{gl}$-regular Nijenhuis operators were classified independently in \cite{BKM-NGIII}. The two classifications use different coordinate systems and different sets of parameters, so the normal form of $L$ cannot be read directly from the displayed formulas for $(g,F)$. The purpose of this paper is to provide a dictionary between these two lists.

Every two-dimensional pseudo-Riemannian metric admits a local null coordinate system in which
\[
g=2\lambda(x,y)\,\dd x\,\dd y,
\qquad \mathcal H = \frac{p_xp_y}{\lambda(x,y)}, \quad \lambda(0,0)\neq0
\]
and the condition $\{\mathcal H,F\}=0$ is equivalent to
\begin{equation}\label{eq:Killing-system}
\begin{aligned}
a_y&=0, & c_x&=0,\\
\lambda a_x+2\lambda_xa+(\lambda b)_y&=0,
&
\lambda c_y+2\lambda_yc+(\lambda b)_x&=0.
\end{aligned}
\end{equation}
Thus $a=a(x)$ and $c=c(y)$. A direct calculation in \eqref{eq:L-definition} gives
\begin{equation}\label{eq:L-null}
L=
\begin{pmatrix}
\frac{\lambda b}{2} & -a\lambda\\
-c\lambda & \frac{\lambda b}{2}
\end{pmatrix},
\qquad
\tr L=\lambda b,
\qquad
\det L=\frac{\lambda^2}{4}(b^2-4ac).
\end{equation}

\begin{proposition}\label{prop:bridge}
If $F$ is a quadratic integral of the geodesic flow of $g$ and $L$ is
defined by \eqref{eq:L-definition}, then $L$ is a Nijenhuis operator; that
is, for all vector fields $\xi$ and $\zeta$,
\[
N_L(\xi,\zeta):=\underbrace{L^2[\xi,\zeta]+[L\xi,L\zeta]
-L[L\xi,\zeta]-L[\xi,L\zeta]}_{\text{Nijenhuis torsion}}\equiv0.
\]
\end{proposition}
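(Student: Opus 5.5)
The plan is to verify the statement by a direct coordinate computation in the null coordinates introduced above, using the explicit form \eqref{eq:L-null} of $L$ and the Killing system \eqref{eq:Killing-system}. First I reduce to finitely many scalar identities. The torsion $N_L$ is a $(1,2)$-tensor, so it is $C^\infty$-linear in both arguments, and it is skew-symmetric. In dimension two it is therefore determined by the single vector $N_L(\partial_x,\partial_y)$, and it suffices to show that its two components vanish.

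In coordinates I use the standard formula
\[
N^i_{jk}=L^l_j\,\partial_l L^i_k-L^l_k\,\partial_l L^i_j-L^i_l\bigl(\partial_j L^l_k-\partial_k L^l_j\bigr).
\]
I abbreviate $\beta=\lambda b/2$, $A=-a\lambda$ and $C=-c\lambda$, so that $L^x_x=L^y_y=\beta$, $L^x_y=A$ and $L^y_x=C$. Substituting with $j=x$, $k=y$, $i=x$, the terms $\pm\beta A_x$ and $\pm\beta\beta_y$ cancel, and one should obtain
\[
N^x_{xy}=C A_y+A C_y-2A\beta_x .
\]
Now I insert $a_y=0$ and $c_x=0$. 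This gives $CA_y=ac\lambda\lambda_y$ and $AC_y=a\lambda(\lambda c_y+c\lambda_y)$. Since $-2A\beta_x=a\lambda(\lambda b)_x$, it follows that
\[
N^x_{xy}=a\lambda\bigl(\lambda c_y+2\lambda_y c+(\lambda b)_x\bigr)=0
\]
by the last equation of \eqref{eq:Killing-system}.

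The component $N^y_{xy}$ is handled by the symmetry $x\leftrightarrow y$, $a\leftrightarrow c$, $A\leftrightarrow C$. This symmetry maps $L$ to its conjugate by the swap of coordinates and interchanges the two nontrivial Killing equations; it also flips the sign of $N_{xy}$, which is harmless. The expected result is
\[
N^y_{xy}=\pm c\lambda\bigl(\lambda a_x+2\lambda_x a+(\lambda b)_y\bigr)=0 .
\]
Hence $N_L(\partial_x,\partial_y)=0$, and so $N_L\equiv0$.

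The only step needing care is the bookkeeping in the torsion formula: getting the signs and index placement right so that the cancellations happen and the remainder factors exactly as $a\lambda$ (respectively $c\lambda$) times a Killing equation. As a cross-check I would redo the first component with the invariant formula applied to $\xi=\partial_x$, $\zeta=\partial_y$. There $[\partial_x,\partial_y]=0$, so only $[L\partial_x,L\partial_y]-L[L\partial_x,\partial_y]-L[\partial_x,L\partial_y]$ has to be expanded.

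Note that the existence of null coordinates, already stated above, is what makes this reduction legitimate. The metric-independent definition \eqref{eq:L-definition} ensures that the $L$ computed in these coordinates is the same tensor, so vanishing of the torsion there is a coordinate-free conclusion.
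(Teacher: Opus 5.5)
Your proposal is correct and follows the paper's proof. Both compute the two torsion components in null coordinates and reduce them to $a\lambda\bigl(\lambda c_y+2\lambda_y c+(\lambda b)_x\bigr)$ and $-c\lambda\bigl(\lambda a_x+2\lambda_x a+(\lambda b)_y\bigr)$, which vanish by the Killing system; so the sign in your $N^y_{xy}$ is minus. I checked your intermediate formula $N^x_{xy}=CA_y+AC_y-2A\beta_x$ and the $x\leftrightarrow y$ symmetry argument, and both are right. One wording fix: in your last paragraph, ``metric-independent'' should read ``coordinate-independent'' (because $\det g\,\det F$ is a scalar, $L$ is a genuine $(1,1)$-tensor).
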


\begin{proof}
The vanishing of the Nijenhuis torsion of a $(1,1)$-tensor field is a
coordinate-invariant condition. We verify it in null coordinates. In dimension two, the skew-symmetry of the Nijenhuis torsion in its lower
indices implies that it has only two potentially nonzero components,
$(N_L)^1_{12}$ and $(N_L)^2_{12}$. For the operator \eqref{eq:L-null}, a
direct computation, using $a_y=0$ and $c_x=0$, gives
\[
\begin{aligned}
(N_L)^1_{12}
=a\lambda\bigl(\lambda c_y+2\lambda_yc+(\lambda b)_x\bigr),
\quad 
(N_L)^2_{12}
=-c\lambda\bigl(\lambda a_x+2\lambda_xa+(\lambda b)_y\bigr).
\end{aligned}
\]
The expressions in parentheses vanish by \eqref{eq:Killing-system}.
Consequently, both components of the Nijenhuis torsion vanish, and hence
$N_L=0$.
\end{proof}

\begin{definition}
Let $g$ be a metric defined near a point $p_0$, let $F$ be a quadratic integral
of its geodesic flow, and let $L$ be given by \eqref{eq:L-definition}. The
point $p_0$ is called \emph{generic} if the algebraic type of $L$ is constant
in a neighborhood of $p_0$, and \emph{singular} otherwise.

The operator $L(p_0)$ is called $\operatorname{gl}$-\emph{regular} if its
Jordan normal form contains exactly one Jordan block for each eigenvalue.
\end{definition}

\begin{remark}
    Without loss of generality, we shall henceforth assume that the singular point $p_0$ coincides with the origin, i.e. $p_0 = (0,0)$.
\end{remark}

\subsection{Normal forms of $(g,F)$}\label{subsec:gF}

We first fix notation for the normal forms of $(g,F)$.
At a generic point, the classification of \cite{BMP} consists of the
following three normal forms:
\begin{equation}\label{eq:R-forms}
\renewcommand{\arraystretch}{1.5}
\begin{array}{c|c|c}
\mathrm{R1}\ \text{(Liouville)}
&
\mathrm{R2}\ \text{(Complex-Liouville)}
&
\mathrm{R3}\ \text{(Jordan block)}
\\
\hline
\displaystyle
g=(X(x)-Y(y))(\dd x^2-\dd y^2)
&
\displaystyle
g=\operatorname{Im}(h)\,\dd x\,\dd y
&
\displaystyle
g=(1+xY'(y))\,\dd x\,\dd y
\\
\hline
\displaystyle
F=\frac{X(x)p_y^2-Y(y)p_x^2}{X(x)-Y(y)}
&
\displaystyle
\begin{gathered}
F=p_x^2-p_y^2+
2\frac{\operatorname{Re}(h)}
       {\operatorname{Im}(h)}p_xp_y,\\[-2pt]
\end{gathered}
&
\displaystyle
F=p_x^2-
2\frac{Y(y)}{1+xY'(y)}p_xp_y
\end{array}
\end{equation}
where $X(x)\neq Y(y)$ in R1 and $h=h(x+\ii y)$ is holomorphic, $\operatorname{Im}(h)\neq0$ in R2.

We now turn to singular points at which the associated operator $L$ is
$\operatorname{gl}$-regular. The complete list of local normal forms of
the corresponding pairs $(g,F)$ was obtained in
\cite[Theorem~2]{AkpanBolsinov} and consists of the families B1.1--B4.

Except for B1.1, all these families are obtained from the formulas
below by choosing the function $f$ and the corresponding solution $\rho$ as indicated in
Table~\ref{tab:B-families}.

Let $\rho(x,y)$ be the local solution of
\begin{equation}\label{eq:rho-flow}
\rho_x=f(\rho),
\qquad
\rho(0,y)=y.
\end{equation}

The families B1.2, B1.3, and B2 are given by
\begin{equation}\label{eq:B12-B2}
\begin{aligned}
g&=
\frac{X(\rho(x,y))-Y(\rho(-x,y))}{f(y)}
\,\dd x\,\dd y,\\
F&=
p_x^2+f(y)^2p_y^2
-2f(y)
\frac{X(\rho(x,y))+Y(\rho(-x,y))}
     {X(\rho(x,y))-Y(\rho(-x,y))}
p_xp_y, \\
X(t)&=f(t)h(t)+H(t),
 \quad Y(t)=-f(t)h(t)+H(t),
\end{aligned}
\end{equation}
where  $h$ and $H$ are
real-analytic functions with $h(0)\neq0$.

The families B3.1, B3.2, and B4 are given by
\begin{equation}\label{eq:B3-B4}
\begin{aligned}
g&=
2\frac{\operatorname{Im}W(\rho(\ii x,y))}{f(y)}
\,\dd x\,\dd y,\\
F&=
-p_x^2+f(y)^2p_y^2
+2f(y)
\frac{\operatorname{Re}W(\rho(\ii x,y))}
     {\operatorname{Im}W(\rho(\ii x,y))}
p_xp_y,\\
W(t)&=H(t)+\ii f(t)h(t),\\
\end{aligned}
\end{equation}
where  $h$ and $H$ are
real-analytic functions with $h(0)\neq0$.

With the convention $g=2\lambda\dd x\dd y,$
the functions $h$ and $H$ appearing above are the initial data
\begin{equation}\label{eq:initial-data}
\lambda(0,y)=h(y),
\qquad
\lambda_x(0,y)=H'(y).
\end{equation}

The complete list of choices of $f$ and $\rho$ is summarized in
Table~\ref{tab:B-families}. We also record the order
$m=\ord_0(f^2)$.

\begingroup
\small
\renewcommand{\arraystretch}{1.2}
\begin{longtable}{
>{\centering\arraybackslash}p{0.09\textwidth}|
>{\centering\arraybackslash}p{0.16\textwidth}|
>{\centering\arraybackslash}p{0.24\textwidth}|
>{\centering\arraybackslash}p{0.31\textwidth}|
>{\centering\arraybackslash}p{0.07\textwidth}}
\caption{}
\label{tab:B-families}\\

\hline
\textbf{Family}
&
\textbf{Formula}
&
\textbf{$f(y)$}
&
\textbf{$\rho(x,y)$}
&
\textbf{$m$}
\\
\hline
\endfirsthead

\hline
\textbf{Family}
&
\textbf{Formula}
&
\textbf{$f(y)$}
&
\textbf{$\rho(x,y)$}
&
\textbf{$m$}
\\
\hline
\endhead

\hline
\endfoot

B1.2
&
\eqref{eq:B12-B2}
&
$y$
&
$ye^x$
&
$2$
\\
\hline

B1.3
&
\eqref{eq:B12-B2}
&
$\displaystyle\frac{2}{2-s}y^{s/2}$, $s>2$
&
$\displaystyle
y\left(1+xy^{(s-2)/2}\right)^{2/(2-s)}$
&
$s$
\\
\hline

B2
&
\eqref{eq:B12-B2}
&
$\displaystyle\frac{y^k}{1+y^{k-1}}$, $k\geq2$
&
the solution of \eqref{eq:rho-flow}
&
$2k$
\\
\hline

B3.1
&
\eqref{eq:B3-B4}
&
$y$
&
$ye^x$
&
$2$
\\
\hline

B3.2
&
\eqref{eq:B3-B4}
&
$\displaystyle\frac{y^k}{1-k}$, $k\geq2$
&
$\displaystyle
y\left(1+xy^{k-1}\right)^{1/(1-k)}$
&
$2k$
\\
\hline

B4
&
\eqref{eq:B3-B4}
&
$\displaystyle\frac{y^k}{1+y^{k-1}}$, $k\geq2$
&
the solution of \eqref{eq:rho-flow}
&
$2k$
\\
\hline

\end{longtable}
\endgroup

The remaining family B1.1 is written separately:
\begin{equation}\label{eq:B11}
\begin{aligned}
g&=
\frac{X(x/2+\sqrt y)-X(x/2-\sqrt y)}{\sqrt y}
\,\dd x\,\dd y,\\
F&=
p_x^2+yp_y^2
-2\sqrt y\,
\frac{X(x/2+\sqrt y)+X(x/2-\sqrt y)}
     {X(x/2+\sqrt y)-X(x/2-\sqrt y)}
p_xp_y,
\end{aligned}
\end{equation}
where $X$ is analytic and $X'(0)\neq0$.

\subsection{Notation for the Nijenhuis normal forms}

We use $(x,y)$ for the coordinates in which the metric--integral pair
$(g,F)$ is written and $(\mathsf{x},\mathsf{y})$ for the coordinates of the
corresponding Nijenhuis operator. These coordinate systems are, in
general, different. At a generic point every two-dimensional
$\operatorname{gl}$-regular Nijenhuis operator can be brought to one of the
following local normal forms:
\begin{equation}\label{eq:G-forms}
\begin{aligned}
\mathrm{G1}:&\quad
L=\begin{pmatrix}
\phi(\mathsf{x})&0\\
0&\psi(\mathsf{y})
\end{pmatrix},
&&\phi(\mathsf{x})\neq \psi(\mathsf{y}),\\
\mathrm{G2}:&\quad
L=\begin{pmatrix}
\phi(\mathsf{x},\mathsf{y})&-\psi(\mathsf{x},\mathsf{y})\\
\psi(\mathsf{x},\mathsf{y})&\phi(\mathsf{x},\mathsf{y})
\end{pmatrix},
&&\phi+\ii\psi\text{ is holomorphic in }\mathsf{x}+\ii \mathsf{y},\quad \psi\neq0,\\
\mathrm{G3}:&\quad
L=\begin{pmatrix}
\phi(\mathsf{y})&1\\
0&\phi(\mathsf{y})
\end{pmatrix}.
\end{aligned}
\end{equation}

\begin{remark}\label{rem:normalization}
At a singular point $p_0$ in dimension two, the eigenvalues of the
operator $L(p_0)$ coincide. Under the $\operatorname{gl}$-regularity
assumption, $L(p_0)$ consists of a single Jordan block. Hence, after an
appropriate linear change of coordinates,
\[
L(p_0)=\lambda_0\Id+J_0,
\qquad
J_0=
\begin{pmatrix}
0&1\\
0&0
\end{pmatrix},
\qquad
\lambda_0=\frac12\tr L(p_0).
\]
Since $L-\lambda_0\Id$ is again a Nijenhuis operator, we shall work
without loss of generality with the normalized representative for which
\[
L(p_0)=J_0.
\]

The classification below is carried out under this normalization.
\end{remark}

The complete list of local normal forms under this assumption was obtained
in \cite[Theorem~5.1]{BKM-NGIII}. We recall the part of this list needed
below.

The elementary series are
\begin{equation}\label{eq:LMN}
\begin{gathered}
L_{\mathrm{nil}}=
\begin{pmatrix}
0&1\\
0&0
\end{pmatrix},
\qquad
L_{\mathrm{nd}}=
\begin{pmatrix}
\mathsf{x}&1\\
\mathsf{y}&0
\end{pmatrix},
\qquad
M_{2\mathsf{k}-1}=
\begin{pmatrix}
0&1\\
0&\mathsf{y}^{2\mathsf{k}-1}
\end{pmatrix},
\\[5pt]
M_{2\mathsf{k}}^{\varepsilon}=
\begin{pmatrix}
0&1\\
0&\varepsilon \mathsf{y}^{2\mathsf{k}}
\end{pmatrix},
\qquad
N_{2\mathsf{k}-1}=
\begin{pmatrix}
\mathsf{y}^{2\mathsf{k}-1}&1\\
0&\mathsf{y}^{2\mathsf{k}-1}
\end{pmatrix},
\qquad
N_{2\mathsf{k}}^{\varepsilon}=
\begin{pmatrix}
\varepsilon \mathsf{y}^{2\mathsf{k}}&1\\
0&\varepsilon \mathsf{y}^{2\mathsf{k}}
\end{pmatrix}.
\end{gathered}
\end{equation}
where $\mathsf{k}\geq1$ and $\varepsilon\in\{\pm1\}$.

The remaining series are described in terms of the scalar
invariants $v=\tr L,
\
u=-\det L.$ If $\dd v\wedge\dd u\not\equiv0$, then at every point where
$\dd v\wedge\dd u\neq0$ the operator can be reconstructed from these
invariants by
\begin{equation}\label{eq:reconstruction}
L=
\begin{pmatrix}
v_{\mathsf{x}}&v_{\mathsf{y}}\\
u_{\mathsf{x}}&u_{\mathsf{y}}
\end{pmatrix}^{-1}
\begin{pmatrix}
v&1\\
u&0
\end{pmatrix}
\begin{pmatrix}
v_{\mathsf{x}}&v_{\mathsf{y}}\\
u_{\mathsf{x}}&u_{\mathsf{y}}
\end{pmatrix}.
\end{equation}

Let
\[
\boldsymbol{\mathsf{c}}=(\mathsf{c}_0,\dots,\mathsf{c}_{\mathsf{k}-1})\in\mathbb R^{\mathsf{k}},
\qquad
C(\mathsf{y})=\mathsf{c}_{\mathsf{k}-1}\mathsf{y}^{\mathsf{k}-1}
+\cdots+\mathsf{c}_1\mathsf{y}+\mathsf{c}_0.
\]
The remaining normal forms are summarized in
Table~\ref{tab:OPS-series}.

\begingroup
\small
\renewcommand{\arraystretch}{1.35}
\setlength{\tabcolsep}{3pt}
\begin{longtable}{
>{\centering\arraybackslash}p{0.11\textwidth}|
>{\centering\arraybackslash}p{0.25\textwidth}|
>{\centering\arraybackslash}p{0.08\textwidth}|
>{\centering\arraybackslash}p{0.17\textwidth}|
>{\raggedright\arraybackslash}p{0.29\textwidth}}
\caption{}
\label{tab:OPS-series}\\
\hline
\textbf{Series}&
\centering\arraybackslash\textbf{$v$}&
\textbf{$u$}&
\centering\arraybackslash\textbf{$\alpha$}&
\centering\arraybackslash\textbf{Parameters}\\
\hline
\endfirsthead
\hline
\textbf{Series}&
\centering\arraybackslash\textbf{$v$}&
\textbf{$u$}&
\centering\arraybackslash\textbf{$\alpha$}&
\centering\arraybackslash\textbf{Parameters}\\
\hline
\endhead
\hline
\endfoot

$O_{\mathsf{k},\boldsymbol{\mathsf{c}}}^{\mathsf{d},\varepsilon}$&
$\alpha\mathsf{x}\mathsf{y}^{2\mathsf{k}-1}
+\mathsf{y}^{\mathsf{k}}C(\mathsf{y})$&
$\varepsilon\mathsf{y}^{\mathsf{d}}$&
$\displaystyle \mathsf{k}\mathsf{c}_0^2
\left(1-\frac{\mathsf{k}}{\mathsf{d}}\right)\neq0$&
$\begin{array}{l}
\mathsf{k}\geq1,\quad \mathsf{d}\geq2\mathsf{k}+1,\\
\varepsilon\in\{\pm1\},\\
\varepsilon=1\text{ if }\mathsf{d}\text{ is odd}
\end{array}$\\
\hline

$P_{\mathsf{s},\boldsymbol{\mathsf{c}}}^{\mathsf{k},\varepsilon}$&
$\alpha\mathsf{x}\mathsf{y}^{\mathsf{s}}
+\mathsf{y}^{\mathsf{s}-\mathsf{k}+1}C(\mathsf{y})
+2\varepsilon\mathsf{y}^{\mathsf{k}}$&
$-\mathsf{y}^{2\mathsf{k}}$&
$2\varepsilon\mathsf{k}\mathsf{c}_0\neq0$&
$\begin{array}{l}
\mathsf{k}\geq1,\quad \mathsf{s}\geq2\mathsf{k},\\
\varepsilon\in\{\pm1\}
\end{array}$\\
\hline

$S_{\boldsymbol{\mathsf{c}}}^{2\mathsf{k},\varepsilon}$&
$\alpha\mathsf{x}\mathsf{y}^{2\mathsf{k}-1}
+\mathsf{y}^{\mathsf{k}}C(\mathsf{y})$&
$\varepsilon\mathsf{y}^{2\mathsf{k}}$&
$\displaystyle\frac{\mathsf{k}}2
(\mathsf{c}_0^2+4\varepsilon)\neq0$&
$\mathsf{k}\geq1$, $\varepsilon\in\{\pm1\}$\\
\hline

$S_{\boldsymbol{\mathsf{c}}}^{2\mathsf{k}+1}$&
$\alpha\mathsf{x}\mathsf{y}^{2\mathsf{k}}
+\mathsf{y}^{\mathsf{k}+1}C(\mathsf{y})$&
$\mathsf{y}^{2\mathsf{k}+1}$&
$2\mathsf{k}+1$&
$\mathsf{k}\geq1$\\
\hline
\end{longtable}
\endgroup

Thus $\mathsf{k},\mathsf{s},$ and $\mathsf{d}$ are reserved for parameters of the
Nijenhuis normal forms, while $k$ and $s$ retain their meaning in the
metric--integral families.

\subsection{Recognition by the trace and determinant}

We shall use the following construction to identify the singular normal
forms. Let $L$ be the operator associated with $(g,F)$ by
\eqref{eq:L-definition}, and put
\[
v=\tr L,\qquad
u=-\det L,\qquad
J=\det\left(\frac{\partial(v,u)}{\partial(x,y)}\right).
\]
For each family considered below, direct calculation gives
\begin{equation}\label{eq:uJ-factorizations}
u=y^d\widetilde u(x,y),\qquad
J=y^N\widetilde J(x,y),\qquad
\widetilde u(0,0)\widetilde J(0,0)\neq0.
\end{equation}

Define
\begin{equation}\label{eq:w-general}
\mathsf{y}=y|\widetilde u(x,y)|^{1/d}.
\end{equation}
Then $u=\varepsilon \mathsf{y}^d,$
where $\varepsilon=\sgn\widetilde u(0,0)$ if $d$ is even; if $d$ is odd,
the sign is absorbed by replacing $\mathsf{y}$ with $-\mathsf{y}$. Since
$\mathsf{y}_y(0,0)\neq0$, the pair $(x,\mathsf{y})$ is a local analytic coordinate
system.

The transformation rule for Jacobians gives
\[
J
=
\det\left(\frac{\partial(v,u)}{\partial(x,\mathsf{y})}\right)
\det\left(\frac{\partial(x,\mathsf{y})}{\partial(x,y)}\right)
=
\varepsilon d \mathsf{y}^{d-1}
\left.\frac{\partial v}{\partial x}\right|_{\mathsf{y}} \mathsf{y}_y.
\]
Since $y$ and $\mathsf{y}$ differ by a nonvanishing analytic factor,
we may, after composition with the inverse coordinate change, write
\[
J=\mathsf{y}^N\widetilde J(x,\mathsf{y}),
\qquad
\widetilde J(0,0)\neq0.
\]
It follows that $N\geq d-1$. We may therefore set
\begin{equation}\label{eq:r-from-orders}
r=N-d+1\geq0.
\end{equation}
Moreover,
\[
\left.\frac{\partial v}{\partial x}\right|_{\mathsf{y}}
=\mathsf{y}^r\widetilde{v_x}(x,\mathsf{y}),\qquad
\widetilde{v_x}(x,\mathsf{y})
=\frac{\widetilde J(x,\mathsf{y})}
{\varepsilon d\,\mathsf{y}_y(x,\mathsf{y})},
\qquad \widetilde{v_x}(0,0)\neq0.
\]
Regard the restriction of $v$ to $x=0$ as a function of $\mathsf{y}$ and write
\begin{equation}\label{eq:trace-Taylor}
v(0,\mathsf{y})=\Pi_r(\mathsf{y})+O(\mathsf{y}^{r+1}),
\end{equation}
where $\Pi_r$ is its Taylor polynomial of degree at most $r$. Integrating
the preceding identity with respect to $x$ at fixed $\mathsf{y}$, we obtain
\[
v(x,\mathsf{y})-v(0,\mathsf{y})
=
\mathsf{y}^r\int_0^x\widetilde{v_x}(t,\mathsf{y})\,\dd t.
\]
On the other hand, \eqref{eq:trace-Taylor} gives
$v(0,\mathsf{y})-\Pi_r(\mathsf{y})
=\mathsf{y}^{r+1}\widetilde v(\mathsf{y})$ for an analytic function
$\widetilde v$. Hence
\[
v-\Pi_r(\mathsf{y})
=
\mathsf{y}^r\left(
\int_0^x\widetilde{v_x}(t,\mathsf{y})\,\dd t
+\mathsf{y}\widetilde v(\mathsf{y})
\right).
\]

Let $\alpha\neq0$ and define
\begin{equation}\label{eq:z-general}
\mathsf{x}=\frac{v-\Pi_r(\mathsf{y})}{\alpha \mathsf{y}^r}.
\end{equation}
Then $\mathsf{x}$ is analytic and $\mathsf{x}_x(0,0)=\frac{\widetilde{v_x}(0,0)}{\alpha}\neq0.$

Thus $(\mathsf{x},\mathsf{y})$ is a local analytic coordinate system, and
\begin{equation}\label{eq:normalized-vu}
u=\varepsilon \mathsf{y}^d,\qquad
v=\Pi_r(\mathsf{y})+\alpha\mathsf{x}\mathsf{y}^r.
\end{equation}

Consequently, the normal form is determined by the recognition data
$d,r,\varepsilon$, and $\Pi_r$. Below, $\alpha$ is chosen as prescribed
by the corresponding normal form.

We can now state the correspondence between the two classifications.

\section{Dictionary}
For the normal forms B1.2--B4, written in null coordinates
$g=2\lambda(x,y)\dd x\dd y$, recall the initial data
\eqref{eq:initial-data}, where $h(0)\neq0$. If $H\not\equiv0$, write
\begin{equation}\label{eq:H-order}
H(y)=y^\ell\widetilde H(y),
\qquad \widetilde H(0)\neq0.
\end{equation}
If $H\equiv0$, we set $\ell=\infty$; in this case $\widetilde H$ is not
used.

For B1.2--B4, write
\begin{equation}\label{eq:f-order}
m=\ord_0(f^2),\qquad
f(y)^2=y^m\widetilde{f^2}(y),\qquad
\widetilde{f^2}(0)\neq0.
\end{equation}
The value of $m$ for each family is recorded in the last column of
Table~\ref{tab:B-families}.

For B1.2, B1.3, and B2 in the critical case $m=2\ell$, put
\begin{equation}\label{eq:D-critical}
D(y)=\widetilde H(y)^2-\frac{f(y)^2}{y^{2\ell}}h(y)^2.
\end{equation}
The quotient $f(y)^2/y^{2\ell}$ is analytic and nonzero at the origin.
Thus $H(y)^2-f(y)^2h(y)^2=y^{2\ell}D(y).$
If $D\not\equiv0$, write
\begin{equation}\label{eq:D-order}
D(y)=y^n\widetilde D(y),
\qquad
\widetilde D(0)\neq0.
\end{equation}

In the normal forms $O,P,S$, the vector $\boldsymbol{\mathsf{c}}$ consists of the
coefficients of the polynomial part $\Pi_r(\mathsf{y})$ obtained in
\eqref{eq:trace-Taylor}. In the table below, $k$ and $s$ are the parameters
specified in Table~\ref{tab:B-families}, while $\ell$ is defined by
\eqref{eq:H-order}. The function $D$ occurs only in the critical case and is
defined by \eqref{eq:D-critical}; if $D\not\equiv0$, its order $n$ is defined
by \eqref{eq:D-order}.

\begin{theorem}
\label{thm:main}
Let $(g,F)$ be one of the generic normal forms R1--R3 displayed in
\eqref{eq:R-forms}, or one of the $\operatorname{gl}$-regular singular
normal forms B1.1--B4 given by \eqref{eq:B11},
\eqref{eq:B12-B2}, and \eqref{eq:B3-B4}, with the choices from
Table~\ref{tab:B-families}.
Let ${L}$ be defined by
\eqref{eq:L-definition}. Then the normal form of $L$ (up to adding $\lambda \operatorname{Id}$, $\lambda \in \mathbb{R}$) is given in
Table~\ref{tab:main-correspondence}. All normal forms occurring in the
third column are displayed in \eqref{eq:G-forms}, \eqref{eq:LMN}, and
Table~\ref{tab:OPS-series}.
\end{theorem}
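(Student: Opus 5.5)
The plan is to go through the families of \eqref{eq:R-forms}, \eqref{eq:B11}, \eqref{eq:B12-B2}, \eqref{eq:B3-B4} one at a time. For each, I will first rewrite the pair in null coordinates $g=2\lambda\,\dd x\,\dd y$ and read off $L$, $v=\tr L=\lambda b$ and $u=-\det L=-\frac{\lambda^2}{4}(b^2-4ac)$ from \eqref{eq:L-null}. Since adding $\lambda\operatorname{Id}$ is allowed, I will always subtract $\frac12\tr L(p_0)$, as in Remark~\ref{rem:normalization}.

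\textbf{Generic cases R1--R3.} Here no recognition machinery is needed. For R1, pass to the null coordinates $x\pm y$. The operator $L$ is diagonal in the original coordinates, with eigenvalues that, up to the constant factor coming from $\operatorname{adj}$, are functions $X(x)$ and $Y(y)$. This is G1 with $\phi=X$, $\psi=Y$, up to sign and an affine normalization.

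For R2, the eigenvalues are $\operatorname{Re}h\pm\ii\operatorname{Im}h$. Since $h$ is holomorphic, this gives G2 with $\phi+\ii\psi=h$ (possibly conjugated or rescaled).

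For R3, $L$ has a double eigenvalue and nonzero nilpotent part. By Proposition~\ref{prop:bridge} it is Nijenhuis, and its eigenvalue is a function of $y$ alone (essentially $Y(y)$). Rescaling the null direction so that the off-diagonal entry becomes $1$ yields G3 with $\phi=Y$. In each case I would verify this by writing $L$ explicitly and exhibiting the coordinate change.

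\textbf{Singular cases B1.2--B4.} The key simplification is that the initial data \eqref{eq:initial-data} determine $v$ and $u$ to the orders that matter. For \eqref{eq:B12-B2}, I would compute $\lambda=\frac{X(\rho(x,y))-Y(\rho(-x,y))}{2f(y)}$ and $b=-2f\frac{X+Y}{X-Y}$. This gives
\[
v=\lambda b=-(X(\rho(x,y))+Y(\rho(-x,y))),\qquad u=\frac{\lambda^2}{4}\bigl(b^2-4ac\bigr)\ \text{up to sign},
\]
with $a=1$ and $c=f^2$, so that $u$ is, up to a constant, $X(\rho(x,y))\,Y(\rho(-x,y))$. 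Substituting $X=fh+H$ and $Y=-fh+H$, I get at $x=0$
\[
v(0,y)=-2H(y),\qquad u(0,y)\propto H(y)^2-f(y)^2h(y)^2 .
\]
The $x$-derivatives come from $\rho_x=f(\rho)$.

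Hence the order $d$ of $u$ is $\min(2\ell,m)$, except in the critical case $m=2\ell$, where it is $2\ell+n$ by \eqref{eq:D-critical}, \eqref{eq:D-order}. The sign $\varepsilon$ is the sign of the leading coefficient: $\widetilde{H}(0)^2$, $-\widetilde{f^2}(0)h(0)^2$, or $\widetilde D(0)$. The Jacobian $J$ factors as in \eqref{eq:uJ-factorizations}, and its order $N$ is obtained from $v_x u_y-v_yu_x$ using $\rho_x=f(\rho)$, which brings in powers of $f$. The same scheme applies to \eqref{eq:B3-B4}, with $X\cdot Y$ replaced by $|W|^2$. In that case $u$ has leading term $H^2+f^2h^2$, which is sign-definite, so there is no critical cancellation.

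With $d$, $r=N-d+1$, $\varepsilon$ and $\Pi_r$ in hand, formula \eqref{eq:normalized-vu} together with \eqref{eq:reconstruction} identifies the form. I expect the following matching:
\begin{itemize}
\item $S^{2\mathsf k,\varepsilon}$ or $S^{2\mathsf k+1}$ when $m<2\ell$;
\item $O$ when $2\ell<m$, where $\Pi_r$ begins with $-2\widetilde H(0)y^\ell$ and $\mathsf d=m$;
\item $P$ in the critical case, where $u=-\mathsf y^{2\mathsf k}$ arises from $H^2-f^2h^2$ with $\mathsf k=\ell$;
\item $O$ or $M$ in the degenerate critical case $D\equiv0$.
\end{itemize}
The cases $H\equiv0$ or $v\equiv$ const, where $\dd v\wedge\dd u\equiv0$, land in the elementary series \eqref{eq:LMN}. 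These I would identify directly from the Jordan structure and the vanishing orders.

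\textbf{B1.1.} Here $u\propto y\,(\dots)$ is of order one and $J(0,0)\ne0$, so I expect $L_{\mathrm{nd}}$ with $(\mathsf x,\mathsf y)=(v,u)$.

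\textbf{Main obstacle.} The hardest step is computing the exact order $N$ of $J$, and the nonvanishing of the constant $\alpha$ required by the target normal form, in each subcase. Cancellations between the $X$ and $Y$ terms in $v_x$ can raise the order, and $\alpha$ must match the tabulated expression in $\mathsf c_0$, $\mathsf k$, $\mathsf d$. I would first expand everything to the first nontrivial order in $x$, using $\rho(\pm x,y)=y\pm xf(y)+O(x^2)$. This reduces $v_x(0,y)$ and $u_x(0,y)$ to expressions in $f$, $h$, $H$ and their derivatives, from which $N$ is read off. I would then check consistency of $\alpha$ against Table~\ref{tab:OPS-series}, which serves as a test that the matching is right.
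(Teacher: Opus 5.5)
You have the same overall scheme as the paper: compute $v=\lambda b$ and $u=-\det L$ in null coordinates, read off $d$, $r$, $\varepsilon$, $\Pi_r$, then apply \eqref{eq:w-general}--\eqref{eq:normalized-vu}. But the step that decides the answer is missing, and as a result you have $O$ and $P$ interchanged. The series is read off by comparing $r$ with $d$:
$r=d-1$ gives $S$; $r\le d-2$ gives $O$ (because $\mathsf d\ge 2\mathsf k+1$); $r\ge d$ gives $P$ (because $\mathsf s\ge 2\mathsf k$). The constant $\alpha$ is not the obstacle, since in \eqref{eq:z-general} it is a free nonzero normalization.

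You never determine $r$. The paper gets it from the closed form
$J=2\bigl(K(\rho_+)+H(\rho_+)+K(\rho_-)-H(\rho_-)\bigr)\bigl(K'+H'\bigr)(\rho_+)\bigl(H'-K'\bigr)(\rho_-)\,f(\rho_+)f(\rho_-)/f(y)$.
Its factors are $y^{m/2},y^{a-1},y^{b-1},y^{m/2}$ times units, so $N=d+m-2$ and $r=m-1$ in every subcase, and likewise for \eqref{eq:B3-B4}. The consequences are:
\begin{itemize}
\item For $m>2\ell$: $d=2\ell$ (your own $d=\min(2\ell,m)$ already contradicts your ``$\mathsf d=m$''), $u=-\mathsf y^{2\ell}$, $r=m-1\ge d$, and $v(0,\mathsf y)=-2\sgn\widetilde H(0)\,\mathsf y^{\ell}+\dots$. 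This is $P^{\ell,-\sgn\widetilde H(0)}_{m-1}$, not $O$. A leading term $\pm2\mathsf y^{\mathsf k}$ in $v$ together with $u=-\mathsf y^{2\mathsf k}$ is exactly the signature of $P$.
\item For the critical case $m=2\ell$: $r=2\ell-1<d$, so $P$ cannot occur. $D(0)\ne0$ gives $S^{2\ell,-\sgn D(0)}$; $D=y^n\widetilde D$ with $n\ge1$ gives $O^{2\ell+n,\varepsilon}_{\ell}$; only $D\equiv0$, where $u\equiv0$, gives $M$.
\end{itemize}

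The signs also cannot be left ``up to sign''. By \eqref{eq:L-null}, $u=\frac{\lambda^2}{4}(4ac-b^2)=-X(\rho_+)Y(\rho_-)$ exactly, so $u(0,y)=f^2h^2-H^2$, which equals $-y^{2\ell}D$ in the critical case. The leading signs you list ($\widetilde H(0)^2$, $-\widetilde{f^2}(0)h(0)^2$, $\widetilde D(0)$) are all reversed. They would give $S^{2q,-1}$ instead of $S^{2q,1}$, and $S^{2\ell,\sgn D(0)}$ instead of $S^{2\ell,-\sgn D(0)}$. For $m>2\ell$ they would give $u=+\mathsf y^{2\ell}$ with $r\ge d$, which fits no normal form. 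For \eqref{eq:B3-B4}, $u(0,y)=-(H^2+f^2h^2)<0$, and this is why one gets $S^{2k,-1}$ and $P$ there.

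There are also some smaller gaps:
\begin{itemize}
\item $H\equiv0$ does not make $\dd v\wedge\dd u\equiv0$. It is the case $\ell=\infty$, covered by $m<2\ell$, and there $J\not\equiv0$.
\item A first-order expansion in $x$ only gives $J(0,y)$. It does not by itself show that $y^N$ divides $J$ for all $x$; the factorized $J$ does.
\item For R3 you omit the subcases $Y\equiv0$ and $Y=y^{j}\widetilde Y$, which give $L_{\mathrm{nil}}$, $N_{2q-1}$ and $N_{2q}^{-\sgn\widetilde Y(0)}$. There the eigenvalue is $-Y/2$, not $Y$, and that sign is what fixes the superscript.
\end{itemize}
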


\begingroup
\small
\renewcommand{\arraystretch}{1.15}
\setlength{\tabcolsep}{3pt}
\begin{longtable}{
    >{\centering\arraybackslash}p{0.15\textwidth}|
    p{0.34\textwidth}|
    p{0.41\textwidth}}
\caption{}
\label{tab:main-correspondence}\\

\hline
\textbf{Input: $(g,F)$}
&
\centering\arraybackslash\textbf{Subcase}
&
\centering\arraybackslash\textbf{Output: normal form of $L$}
\\
\hline
\endfirsthead

\hline
\textbf{Input: $(g,F)$}
&
\centering\arraybackslash\textbf{Subcase}
&
\centering\arraybackslash\textbf{Output: normal form of $L$}
\\
\hline
\endhead

\hline
\endfoot

$\mathrm{R1}$; \eqref{eq:R-forms}
& --
& $\mathrm{G1}$;  \eqref{eq:G-forms}\\ \hline
$\mathrm{R2}$;  \eqref{eq:R-forms}
& --
& $\mathrm{G2}$;  \eqref{eq:G-forms}\\ \hline
$\mathrm{R3}$; \eqref{eq:R-forms}
& \begin{tabular}[t]{@{}l@{}}
$Y(0)\neq0$\\
$Y\equiv0$\\
$Y=y^{2q-1}\widetilde Y$, $\widetilde Y(0)\neq0$\\
$Y=y^{2q}\widetilde Y$, $\widetilde Y(0)\neq0$
\end{tabular}
& \begin{tabular}[t]{@{}l@{}}
$\mathrm{G3}$; \eqref{eq:G-forms}\\
$L_{\mathrm{nil}}$; \eqref{eq:LMN}\\
$N_{2q-1}$; \eqref{eq:LMN}\\
$N_{2q}^{-\sgn\widetilde Y(0)}$; \eqref{eq:LMN}
\end{tabular}\\ \hline
$\mathrm{B1.1}$; see \eqref{eq:B11}
& --
& $L_{\mathrm{nd}}$; \eqref{eq:LMN}\\ \hline
$\mathrm{B1.2}$; \eqref{eq:B12-B2}, \eqref{eq:H-order},
\eqref{eq:D-critical}, \eqref{eq:D-order};
Table~\ref{tab:B-families}
& \begin{tabular}[t]{@{}l@{}}
$\ell>1$\\
$\ell=1$, $D(0)\neq0$\\
$\ell=1$, $D=y^n\widetilde D$, $n\geq1$\\
$\ell=1$, $D\equiv0$
\end{tabular}
& \begin{tabular}[t]{@{}l@{}}
$S_{\boldsymbol{\mathsf{c}}}^{2,1}$; Table~\ref{tab:OPS-series}\\
$S_{\boldsymbol{\mathsf{c}}}^{2,-\sgn D(0)}$; Table~\ref{tab:OPS-series}\\
$O_{1,\boldsymbol{\mathsf{c}}}^{2+n,\varepsilon}$; Table~\ref{tab:OPS-series}\\
$M_1$; \eqref{eq:LMN}
\end{tabular}\\ \hline
$\mathrm{B1.3}$; \eqref{eq:B12-B2}, \eqref{eq:H-order},
\eqref{eq:D-critical}, \eqref{eq:D-order};
Table~\ref{tab:B-families}
& \begin{tabular}[t]{@{}l@{}}
$s=2q<2\ell$\\
$s=2q+1<2\ell$\\
$s>2\ell$\\
$s=2\ell$, $D(0)\neq0$\\
$s=2\ell$, $D=y^n\widetilde D$, $n\geq1$\\
$s=2\ell$, $D\equiv0$, $\ell=2q-1$\\
$s=2\ell$, $D\equiv0$, $\ell=2q$
\end{tabular}
& \begin{tabular}[t]{@{}l@{}}
$S_{\boldsymbol{\mathsf{c}}}^{2q,1}$; Table~\ref{tab:OPS-series}\\
$S_{\boldsymbol{\mathsf{c}}}^{2q+1}$; Table~\ref{tab:OPS-series}\\
$P_{s-1,\boldsymbol{\mathsf{c}}}^{\ell,-\sgn\widetilde H(0)}$; Table~\ref{tab:OPS-series}\\
$S_{\boldsymbol{\mathsf{c}}}^{2\ell,-\sgn D(0)}$; Table~\ref{tab:OPS-series}\\
$O_{\ell,\boldsymbol{\mathsf{c}}}^{2\ell+n,\varepsilon}$; Table~\ref{tab:OPS-series}\\
$M_{2q-1}$; \eqref{eq:LMN}\\
$M_{2q}^{-\sgn\widetilde H(0)}$; \eqref{eq:LMN}
\end{tabular}\\ \hline
$\mathrm{B2}$; \eqref{eq:B12-B2}, \eqref{eq:H-order},
\eqref{eq:D-critical}, \eqref{eq:D-order};
Table~\ref{tab:B-families}
& \begin{tabular}[t]{@{}l@{}}
$k<\ell$\\
$k>\ell$\\
$k=\ell$, $D(0)\neq0$\\
$k=\ell$, $D=y^n\widetilde D$, $n\geq1$\\
$k=\ell$, $D\equiv0$, $k=2q-1$\\
$k=\ell$, $D\equiv0$, $k=2q$
\end{tabular}
& \begin{tabular}[t]{@{}l@{}}
$S_{\boldsymbol{\mathsf{c}}}^{2k,1}$; Table~\ref{tab:OPS-series}\\
$P_{2k-1,\boldsymbol{\mathsf{c}}}^{\ell,-\sgn\widetilde H(0)}$; Table~\ref{tab:OPS-series}\\
$S_{\boldsymbol{\mathsf{c}}}^{2k,-\sgn D(0)}$; Table~\ref{tab:OPS-series}\\
$O_{k,\boldsymbol{\mathsf{c}}}^{2k+n,\varepsilon}$; Table~\ref{tab:OPS-series}\\
$M_{2q-1}$; \eqref{eq:LMN}\\
$M_{2q}^{-\sgn\widetilde H(0)}$; \eqref{eq:LMN}
\end{tabular}\\ \hline
$\mathrm{B3.1}$; \eqref{eq:B3-B4};
Table~\ref{tab:B-families}
& --
& $S_{\boldsymbol{\mathsf{c}}}^{2,-1}$; Table~\ref{tab:OPS-series}\\ \hline
$\mathrm{B3.2}$; \eqref{eq:B3-B4}; \eqref{eq:H-order};
Table~\ref{tab:B-families}
& \begin{tabular}[t]{@{}l@{}}$k\leq\ell$\\ $k>\ell$\end{tabular}
& \begin{tabular}[t]{@{}l@{}}
$S_{\boldsymbol{\mathsf{c}}}^{2k,-1}$; Table~\ref{tab:OPS-series}\\
$P_{2k-1,\boldsymbol{\mathsf{c}}}^{\ell,\sgn\widetilde H(0)}$; Table~\ref{tab:OPS-series}
\end{tabular}\\ \hline
$\mathrm{B4}$; \eqref{eq:B3-B4}; \eqref{eq:H-order};
Table~\ref{tab:B-families}
& \begin{tabular}[t]{@{}l@{}}$k\leq\ell$\\ $k>\ell$\end{tabular}
& \begin{tabular}[t]{@{}l@{}}
$S_{\boldsymbol{\mathsf{c}}}^{2k,-1}$; Table~\ref{tab:OPS-series}\\
$P_{2k-1,\boldsymbol{\mathsf{c}}}^{\ell,\sgn\widetilde H(0)}$; Table~\ref{tab:OPS-series}
\end{tabular}\\ \hline
\end{longtable}
\endgroup

In all rows containing $O$, the sign is
\[
\varepsilon=
\begin{cases}
-\sgn \widetilde D(0),&n\text{ is even},\\
1,&n\text{ is odd}.
\end{cases}
\]

\section{Proof}
For R1--R3, direct substitution into \eqref{eq:L-definition} gives types G1, G2, and G3, respectively. In R3, the cases
$Y\equiv0$ and $Y(y)=y^j\widetilde Y(y),\ \widetilde Y(0)\neq0,$
give $L_{\mathrm{nil}}$ and, after a one-dimensional normalization,
\[
N_{2q-1}\quad (j=2q-1),\qquad
N_{2q}^{-\sgn\widetilde Y(0)}\quad (j=2q).
\]
For B1.1, the functions $\tr L$ and $\det L$ are functionally
independent, so $L$ is of type $L_{\mathrm{nd}}$.

For B1.2, B1.3, and B2, put
$\rho_+=\rho(x,y)$, $\rho_-=\rho(-x,y)$, and $K=fh$.
A direct calculation gives
\begin{equation}\label{eq:real-recognition}
\begin{aligned}
v&=-K(\rho_+)-H(\rho_+)+K(\rho_-)-H(\rho_-),\\
u&=(K(\rho_+)+H(\rho_+))(K(\rho_-)-H(\rho_-)),\\
J&=
2\bigl(K(\rho_+)+H(\rho_+)+K(\rho_-)-H(\rho_-)\bigr)
\bigl(K'(\rho_+)+H'(\rho_+)\bigr)\\
&\hspace{30mm}\cdot
\bigl(H'(\rho_-)-K'(\rho_-)\bigr)
\frac{f(\rho_+)f(\rho_-)}{f(y)}.
\end{aligned}
\end{equation}
In particular, $v(0,y)=-2H(y),\
u(0,y)=f(y)^2h(y)^2-H(y)^2.$ Since $\rho(x,0)=0$, one has
\[
\rho_+=y\widetilde\rho_+(x,y),\qquad
\rho_-=y\widetilde\rho_-(x,y),
\qquad \widetilde\rho_\pm(0,0)=1.
\]
If $a=\ord_0(H+K)$ and $b=\ord_0(H-K)$,
then \eqref{eq:real-recognition} gives
\[
d=a+b,\qquad N=d+m-2,\qquad r=m-1.
\]
The remaining recognition data are
\[
\begin{array}{c|c|c|c}
\text{condition}&d&\Pi_r(\mathsf{y})&\text{normal form}\\ \hline
m<2\ell,\ m=2q
&2q&\mathsf{y}^qC(\mathsf{y})
&S_{\boldsymbol{\mathsf{c}}}^{2q,1}\\[2pt]
m<2\ell,\ m=2q+1
&2q+1&\mathsf{y}^{q+1}C(\mathsf{y})
&S_{\boldsymbol{\mathsf{c}}}^{2q+1}\\[2pt]
m>2\ell
&2\ell&\mathsf{y}^{m-\ell}C(\mathsf{y})+2\varepsilon_0\mathsf{y}^\ell
&P_{m-1,\boldsymbol{\mathsf{c}}}^{\ell,\varepsilon_0}\\[2pt]
m=2\ell,\ D(0)\neq0
&2\ell&\mathsf{y}^\ell C(\mathsf{y})
&S_{\boldsymbol{\mathsf{c}}}^{2\ell,-\sgn D(0)}\\[2pt]
m=2\ell,\ D=y^n\widetilde D
&2\ell+n&\mathsf{y}^\ell C(\mathsf{y})
&O_{\ell,\boldsymbol{\mathsf{c}}}^{2\ell+n,\varepsilon},
\end{array}
\]
where
\[
\varepsilon_0=-\sgn \widetilde H(0),\qquad
\varepsilon=
\begin{cases}
-\sgn \widetilde D(0),&n\text{ is even},\\
1,&n\text{ is odd}.
\end{cases}
\]
Indeed, these values follow immediately by comparing the two terms in
\[
u(0,y)=f(y)^2h(y)^2-H(y)^2
\]
and using $v(0,y)=-2H(y)$. In the case $m>2\ell$, the normalization
$u=-\mathsf{y}^{2\ell}$ gives
\[
v(0,\mathsf{y})=-2\sgn \widetilde H(0)\mathsf{y}^\ell+O(\mathsf{y}^{\ell+1}),
\]
which determines the sign $\varepsilon_0$.

If $m=2\ell$ and $D\equiv0$, then $H=\tau fh$ for some
$\tau\in\{\pm1\}$. Hence one of $X,Y$ vanishes identically and
$u\equiv0$. A one-dimensional normalization of $v$ gives
\[
M_{2q-1}\quad\text{if }\ell=2q-1,\qquad
M_{2q}^{-\sgn\widetilde H(0)}
\quad\text{if }\ell=2q.
\]

For B3.1, B3.2, and B4, put
$\rho_+=\rho(\ii x,y)$, $\rho_-=\rho(-\ii x,y)$,
$K=fh$, and $W=H+\ii K$. A direct calculation gives
\begin{equation}\label{eq:complex-recognition}
\begin{aligned}
v&=W(\rho_+)+\overline W(\rho_-),\qquad
u=-W(\rho_+)\overline W(\rho_-),\\
J&=
2\ii\bigl(\overline W(\rho_-)-W(\rho_+)\bigr)
W'(\rho_+)\overline W'(\rho_-)
\frac{f(\rho_+)f(\rho_-)}{f(y)}.
\end{aligned}
\end{equation}
At $x=0$,
\[
v(0,y)=2H(y),\qquad
u(0,y)=-H(y)^2-f(y)^2h(y)^2.
\]
The two terms in $u(0,y)$ cannot cancel; hence
$d=\min\{m,2\ell\}$, $N=m+d-2$, and $r=m-1$.

The remaining data are
\[
\begin{array}{c|c|c|c}
\text{condition}&d&\Pi_r(\mathsf{y})&\text{normal form}\\ \hline
m\leq2\ell
&m&\mathsf{y}^{m/2}C(\mathsf{y})
&S_{\boldsymbol{\mathsf{c}}}^{m,-1}\\[2pt]
m>2\ell
&2\ell&\mathsf{y}^{m-\ell}C(\mathsf{y})+2\varepsilon_0\mathsf{y}^\ell
&P_{m-1,\boldsymbol{\mathsf{c}}}^{\ell,\varepsilon_0},
\end{array}
\qquad
\varepsilon_0=\sgn \widetilde H(0).
\]
This gives $S_{\boldsymbol{\mathsf{c}}}^{2,-1}$ for B3.1 and, writing $m=2k$,
\[
S_{\boldsymbol{\mathsf{c}}}^{2k,-1}\quad(k\leq\ell),\qquad
P_{2k-1,\boldsymbol{\mathsf{c}}}^{\ell,\sgn\widetilde H(0)}
\quad(k>\ell)
\]
for B3.2 and B4.

Applying the construction
\eqref{eq:w-general}--\eqref{eq:normalized-vu} to the data above gives
the normal forms stated in Table~\ref{tab:main-correspondence}. Formula
\eqref{eq:reconstruction} identifies the operators wherever
$\dd v\wedge\dd u\neq0$, and the equality extends to the origin by
analyticity.

\textbf{Acknowledgment.} The author thanks Alexey Bolsinov, Vladimir Matveev and Matteo Manenti for their comments on the manuscript. The author was supported by the DFG (project No. 529233771).

\end{document}